\documentclass[14pt]{amsart}

\usepackage{cases}
\usepackage{amsmath}
\usepackage{amsfonts}
\usepackage{bm}
\usepackage{amsfonts,amsmath,amssymb,amscd,bbm,amsthm,mathrsfs,dsfont}
\usepackage{mathrsfs}
\usepackage{pb-diagram}
\usepackage{amssymb}
%\usepackage{xypic}
%\usepackage[dvips]{graphicx}
%\usepackage[all]{xy}
%\usepackage{CJK}
%\usepackage[dvipsnames]{xcolor}

%%%%%%%%%%%%%%%%%%%%%%%%%%%%%%%%%%%%%%%%%%%

\newtheorem{Theorem}{Theorem}[section]
\newtheorem{Lemma}[Theorem]{Lemma}
\newtheorem{Definition}[Theorem]{Definition}

\newtheorem{Proposition}[Theorem]{Proposition}

\newtheorem{Remark}[Theorem]{Remark}
\newtheorem{Conjecture}[Theorem]{Conjecture}

\date{version of \today}

\setlength{\textwidth}{15.3cm} \setlength{\textheight}{23cm}
\setlength{\topmargin}{-0.0cm} \setlength{\oddsidemargin}{-1mm}
\setlength{\evensidemargin}{-1mm} \setlength{\abovedisplayskip}{3mm}
\setlength{\belowdisplayskip}{3mm}
\setlength{\abovedisplayshortskip}{0mm}
\setlength{\belowdisplayshortskip}{2mm} \normalbaselines
\raggedbottom

\title{A conjecture on $C$-matrices of cluster algebras }

\author{Peigen Cao $\;\;\;\;\;\;$ Min Huang $\;\;\;\;\;\;$ Fang Li $\;\;\;\;\;\;$}
\address{Peigen Cao
\newline Department
of Mathematics, Zhejiang University (Yuquan Campus), Hangzhou, Zhejiang
310027,  P.R.China}
\email{peigencao@126.com}

\address{Min Huang
\newline Department
of Mathematics, Zhejiang University (Yuquan Campus), Hangzhou, Zhejiang
310027,  P.R.China}
\email{minhuang1989@hotmail.com}

\address{Fang Li
\newline Department
of Mathematics, Zhejiang University (Yuquan Campus), Hangzhou, Zhejiang
310027, P.R.China}
\email{fangli@zju.edu.cn}

\begin{document}
%\begin{CJK*}{GBK}{song}
\renewcommand{\thefootnote}{\alph{footnote}}

\begin{abstract}

For a skew-symmetrizable cluster algebra $\mathcal A_{t_0}$ with principal coefficients at $t_0$, we prove that each seed $\Sigma_t$ of  $\mathcal A_{t_0}$ is uniquely determined by its {\bf C-matrix}, which was proposed by  Fomin and  Zelevinsky in \cite{FZ3} as a conjecture. Our proof is based on the fact that the positivity of cluster variables and sign-coherence of $c$-vectors hold for $\mathcal A_{t_0}$, which was actually verified in \cite{GHKK}. More  discussion is given in the sign-skew-symmetric case so as to obtain a conclusion as weak version of the conjecture in this general case.
\end{abstract}

\maketitle
\bigskip

\section{Introduction and preliminaries}
Cluster algebras with principal coefficients are important research objects  in the theory of cluster algebra.  {\bf $g$}-vectors and the related {\bf $c$}-vectors are introduced to describe cluster variables and coefficients in some sense. As shown in \cite{NZ}, {\bf $c$}-vectors and {\bf $g$}-vectors are closely related with each other, where the {\bf $c$}-vectors (respectively, {\bf $g$}-vectors) are defined as  the column vectors of {\bf C-matrices} (respectively, {\bf G-matrices}) of a cluster algebra with principal coefficients.
In \cite{FZ3}, the authors conjectured that

\begin{Conjecture} \label{coj1} (Conjecture 4.7 of \cite{FZ3} )
Let $\mathcal A_{t_0}$ be the cluster algebra with principal coefficients at $\Sigma_{t_0}$ (or say at $t_0$), $\Sigma_t=(X_t,\tilde B_t)$ be a seed of $\mathcal A_{t_0}$ obtained from $\Sigma_{t_0}$ by iterated mutations. Then $\Sigma_t$ is uniquely determined by $C_t$, where $C_t$ is the lower part of $\tilde B_t=\begin{pmatrix}B_t\\C_t\end{pmatrix}$.
\end{Conjecture}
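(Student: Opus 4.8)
The plan is to reconstruct the whole seed $\Sigma_t=(X_t,\tilde B_t)$ from $C_t$ alone, working inside the fixed skew-symmetrizable cluster algebra $\mathcal A_{t_0}$ with principal coefficients. Since $\tilde B_t=\begin{pmatrix}B_t\\ C_t\end{pmatrix}$ already contains $C_t$, it is enough to recover the top block $B_t$ and the cluster $X_t$; so I would fix two seeds $\Sigma_t,\Sigma_{t'}$ of $\mathcal A_{t_0}$ with $C_t=C_{t'}$ and prove $B_t=B_{t'}$ and $X_t=X_{t'}$. Throughout I will use the two inputs established in \cite{GHKK}: every cluster variable of $\mathcal A_{t_0}$ is a Laurent polynomial in $x_{1;t_0},\dots,x_{n;t_0},y_1,\dots,y_n$ with nonnegative coefficients, and every $c$-vector of $\mathcal A_{t_0}$ is sign-coherent.

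First I would recover $B_t$ (hence $\tilde B_t$) and the $\mathbf G$-matrix $G_t$ from $C_t$. Note that $C_t\in GL_n(\mathbb Z)$: one has $C_{t_0}=I_n$, and a single mutation multiplies $C$-matrices on the right by an elementary matrix $E$ of determinant $-1$, so $\det C_t=\pm1$. Sign-coherence lets one write the simultaneous mutation of $(B_t,C_t)$ through one and the same family of elementary matrices, which is exactly the setting of the Nakanishi--Zelevinsky tropical duality \cite{NZ}; from it one obtains that $G_t$ is an explicit function of $C_t$ (in the skew-symmetric case $G_t=(C_t^{\top})^{-1}$, the general skew-symmetrizable case being governed by the analogous $D$-twisted identity), and that $C_tB_tC_t^{\top}=B_{t_0}$, equivalently $B_t=C_t^{-1}B_{t_0}(C_t^{\top})^{-1}$, again with the evident twisted form. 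Both identities hold at $t_0$ and, using $C_{t'}=C_tE$ and $B_{t'}=E^{-1}B_t(E^{-1})^{\top}$, are preserved by a single mutation, so a short induction on the length of a mutation sequence $t_0\rightsquigarrow t$ completes this step; in particular $C_t=C_{t'}$ forces $B_t=B_{t'}$ and $G_t=G_{t'}$.

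Next I would recover $X_t$ from $G_t$, which is where positivity is essential. By \cite{GHKK}, distinct cluster variables of $\mathcal A_{t_0}$ have distinct $g$-vectors; since the columns of $G_t$ are precisely the $g$-vectors of $x_{1;t},\dots,x_{n;t}$, the matrix $G_t$ determines the ordered tuple $(x_{1;t},\dots,x_{n;t})$, i.e.\ $X_t$. Concretely, the separation-of-additions formula of \cite{FZ3} writes $x_{i;t}=x_{t_0}^{\,g_{i;t}}\,F_{i;t}(\hat y_1,\dots,\hat y_n)/F_{i;t}|_{\mathrm{Trop}}(y_1,\dots,y_n)$ with $g_{i;t}$ the $i$-th column of $G_t$, and positivity (constant term $1$ and nonnegative coefficients of $F_{i;t}$) is what makes $x_{i;t}$ recoverable from $g_{i;t}$. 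Hence $X_t=X_{t'}$, and together with the previous step this yields $\Sigma_t=\Sigma_{t'}$, which is the assertion.

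I expect the conceptual crux --- and the reason the statement is delicate --- to be precisely that everything hinges on the two deep inputs of \cite{GHKK}: if sign-coherence failed, the tropical-duality identities of the second paragraph would be invalid and $B_t$ and $G_t$ could not be extracted from $C_t$; if positivity failed, the $g$-vector of a cluster variable would no longer be a distinguished, well-defined exponent and the reconstruction of $X_t$ would collapse. Internally, the only genuinely technical point is checking, with the correct sign conventions, that the mutations of $B_t$ and of $C_t$ are driven by the same elementary matrices and that this makes $C_tB_tC_t^{\top}=B_{t_0}$ a mutation invariant; this is the computation I would carry out in full, the rest being formal. In the merely sign-skew-symmetric setting neither input is available in full strength, so one should only expect a weakened form of the conclusion, as discussed below.
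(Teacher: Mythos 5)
Your first step --- extracting $B_t$ and $G_t$ from $C_t$ via sign-coherence and tropical duality --- matches what the paper does (its Proposition 1.10, giving $SC_tS^{-1}G_t^{\top}=I_n$ and the twisted identity relating $B_t$ to $B_{t_0}$), and is fine. The gap is in your second step. You declare at the outset that your only inputs from \cite{GHKK} are positivity and sign-coherence, but then the step ``$G_t$ determines $X_t$'' is carried by the assertion that distinct cluster variables have distinct $g$-vectors. That assertion is not one of your two declared inputs, and it is essentially the whole content of the conjecture once $B_t$ and $G_t$ have been recovered from $C_t$; invoking it as known makes the argument circular relative to what you set out to prove from positivity and sign-coherence alone. (It can be extracted from the theta-function machinery of \cite{GHKK}, but that is a far stronger input, and you neither invoke that machinery nor derive the claim.) The justification you do offer does not close the gap: the separation formula $x_{i;t}=x_{t_0}^{\,g_{i;t}}F_{i;t}(\hat y_1,\dots,\hat y_n)$ expresses $x_{i;t}$ in terms of $g_{i;t}$ \emph{and} the $F$-polynomial $F_{i;t}$, and knowing that $F_{i;t}$ has nonnegative coefficients and constant term $1$ does not determine $F_{i;t}$ from $g_{i;t}$. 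So ``$x_{i;t}$ is recoverable from $g_{i;t}$'' is exactly what remains to be proved.

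The paper closes this gap differently, and this is where its real work lies. It does not compare $\Sigma_{t_1}$ and $\Sigma_{t_2}$ directly; it transports both seeds along the same mutation path that carries $\Sigma_{t_1}$ to $\Sigma_{t_0}$, so that the comparison is reduced to the single special case $(G_t,C_t,B_t)=(I_n,I_n,B_{t_0})$. In that case $\mathcal A_{t_0}$ also has principal coefficients at $t$, so one can write the separation formula in both directions ($x_{i;t}$ in terms of $X_{t_0}$ and $x_{j;t_0}$ in terms of $X_t$), compose the two, specialize all $x_{j;t}$ to $1$, and obtain a polynomial identity among the $F$-polynomials. Positivity (all $F$-polynomials lie in $\mathbb Z_{\geq0}[y_1,\dots,y_n]$) together with the sign-coherence consequences (constant term $1$ and a unique maximal monomial divisible by all others) then forces $F_{i;t}^{t_0}=1$, hence $x_{i;t}=x_{i;t_0}$. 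If you want a proof in the spirit you announced, you need to supply an argument of this kind for your second step; as written, that step is asserted rather than proved.
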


Conjecture \ref{coj1} is true in case $\mathcal A_{t_0}$ is of finite type \cite{RE}, and in the skew-symmetric case \cite{IKKN} using the methods in \cite{PP,PP1}.

In this paper, based on some fundamental notions and methods, we give an affirmation to Conjecture \ref{coj1} in the case that $B_{t_0}$ is  skew-symmetrizable.

Let $\mathcal A_{t_0}$  be a skew-symmetrizable cluster algebra with principal coefficients at $t_0$.  In this paper, we give an affirmation to Conjecture \ref{coj1} based on the fact that  the positivity of cluster variables and sign-coherence of $c$-vectors hold for $\mathcal A_{t_0}$, which was actually verified in \cite{GHKK}.

This paper is organized as follows. In this section, some basic definitions are introduced. In Section 2, we give the main result, i.e. Theorem \ref{mainthm} which gives an affirmation to Conjecture \ref{coj1} in skew-symmetrizable case. In Section 3, we discuss the weak version of Conjecture \ref{coj1} for acyclic totally sign-skew-symmetric cluster algebra using the same method in Section 2.
\\

\begin{Definition}
Let $A$ be an $(m+n)\times n$ integer matrix. The mutation of $A$  in direction $k\in\{1,2,\cdots,n\}$ is the $(m+n)\times n$ matrix $\mu_k(A)=(a_{ij}^{\prime})$, satisfying
\begin{eqnarray}\label{bmutation}
a_{ij}^{\prime}=\begin{cases}-a_{ij}~,& i=k\text{ or } j=k;\\ a_{ij}+a_{ik}[-a_{kj}]_++[a_{ik}]_+a_{kj}~,&otherwise,\end{cases}
\end{eqnarray}
 where $[a]_+=max\{a,0\}$ for any $a\in \mathbb R$.
 \end{Definition}

Recall that an $n\times n$ integer matrix $B=(b_{ij})$ is called {\bf sign-skew-symmetric} if $b_{ij}b_{ji}<0$ or $b_{ij}=b_{ji}=0$ for any $i,j=1,2,\cdots,n$.
A sign-skew-symmetric $B$ is {\bf totally sign-skew-symmetric} if any matrix $B^{\prime}$ obtained from $B$ by a sequence of mutations is sign-skew-symmetric.

 An $n\times n$ integer matrix $B=(b_{ij})$ is called  {\bf skew-symmetrizable} if there is a positive integer diagonal matrix $S$ such that $SB$ is skew-symmetric, where $S$ is said to be the {\bf skew-symmetrizer} of $B$.

It is well-known that skew-symmetry, skew-symmetrizability are invariant under mutation, thus skew-symmetrizable integer matrices are always totally sign-skew-symmetric.

 For a sign-skew-symmetric matrix $B$, we can encode the sign pattern of entries of $B$ by the directed graph $\Gamma(B)$ with the vertices $1,2,\cdots,n$ and the directed edges $(i,j)$ for $b_{ij}>0$. A sign-skew-symmetric matrix $B$ is called {\bf acyclic} if $\Gamma(B)$ has no oriented cycles. As shown in \cite{HL}, an acyclic sign-skew-symmetric integer matrix $B$ is always totally sign-skew-symmetric.

Let $\tilde B=\begin{pmatrix}B_{n\times n}\\C_{m\times n}\end{pmatrix}=(\tilde b_{ij})$ be an $(m+n)\times n$ integer matrix  such that $B$ is totally sign-skew-symmetric. $\tilde{B}$ is said to be {\bf skew-symmetric, skew-symmetrizable, (totally) sign-skew-symmetric} respectively if so is $B$.

  \begin{Definition} Let $\mathcal F$ be the field of rational functions in $m+n$ independent variables with coefficients $\mathbb Q$.
A {\bf (labeled) seed} $\Sigma$ in $\mathcal F$ is a pair $(X, \tilde B)$, where

 (i)~  $\tilde B=\begin{pmatrix}B\\C\end{pmatrix}$ is given as above, called an {\bf extended exchange matrix}. The $n\times n$ submatrix $B$ is called an {\bf exchange matrix}, the $m\times n$ submatrix $C$ is called a {\bf coefficients matrix}.

 (ii)~$X=(x_1,\cdots,x_{m+n})$ is a $(m+n)$-tuple, called an {\bf extended cluster} with $x_1,\cdots,x_{m+n}$ forming a free generating set of $\mathcal F$.
 $x_1,\cdots,x_n$ are called {\bf cluster variables} and $x_{n+1},\cdots,x_{m+n}$ are called {\bf frozen variables}. Sometimes we also write frozen variables as $y_1,\cdots,y_m$ in this paper, i.e. $x_{n+i}=y_i$ for $i=1,\cdots,m$.
\end{Definition}

\begin{Definition}
Let $k\in \{1,2,\cdots,n\}$. The {\bf mutation} of the seed $\Sigma=(X,\tilde B)$ in the direction $k$ is the seed $\Sigma^{\prime}=(X^{\prime},\tilde B^{\prime})$ where

(i)~$X^{\prime}=(x_1^{\prime},\cdots,x_{m+n}^{\prime})$ with $x_i^{\prime}=x_i$ if $i\neq k$ and
$x_k^{\prime}x_k=\prod\limits_{i=1}^{m+n} x_i^{[\tilde b_{ik}]_+}+\prod\limits_{i=1}^{m+n} x_i^{[-\tilde b_{ik}]_+}$.

(ii)~$\tilde B^{\prime}=(\tilde b_{ij}^{\prime})$ is defined by
\begin{eqnarray}\label{bmutation}
\tilde b_{ij}^{\prime}=\begin{cases}-\tilde b_{ij}~,& i=k\text{ or } j=k;\\ \tilde b_{ij}+\tilde b_{ik}[-\tilde b_{kj}]_++[\tilde b_{ik}]_+\tilde b_{kj}~,&otherwise.\end{cases}
\end{eqnarray}
We write $\Sigma^{\prime}=\mu_k(\Sigma)$, where $\mu_k$ is called the {\bf mutation map} in $k$.
\end{Definition}
It can be seen that mutation map is always an involution, i.e. $\mu_k\mu_k(\Sigma)=\Sigma$.
\begin{Definition}
The {\bf cluster algebra} $\mathcal A(\Sigma)$ is the $\mathbb Z[y_1,\cdots,y_m]$-subalgebra of $\mathcal F$ generated by all cluster variables obtained from the seed $\Sigma=(X,\tilde B)$ by iterated mutations. $\Sigma$ is called an {\bf initial seed} of $\mathcal A(\Sigma)$.
\end{Definition}

A cluster algebra is called {\bf acyclic} if it admits an acyclic exchange matrix.

Denote by $I_n$ an $n\times n$ identity matrix. If $\tilde B=\begin{pmatrix}B\\I_n\end{pmatrix}$, then $\mathcal A(\Sigma)$ is called the cluster algebra with {\bf principal coefficients at $\Sigma$}. In this case, $m=n$.

\begin{Definition}
(\cite{M})  A {\bf cluster pattern}  $M$ in $\mathcal F$ is an assignment for each seed  $\Sigma_t$ to a vertex $t$ of the $n$-regular tree $\mathbb T_n$, such that for any edge $t^{~\underline{\quad k \quad}}~ t^{\prime},~\Sigma_{t^{\prime}}=\mu_k(\Sigma_t)$. The pair of $\Sigma_t$ are written as $\Sigma_t=(X_t,\tilde B_t)$ with
$X_t=(x_{1;t},\cdots,x_{m+n;t}),~\tilde B_{t}=\begin{pmatrix} B_{t}\\ C_{t}\end{pmatrix}$,  where $B_t=(b_{ij}^t),~C_t=(c_{ij}^t),~x_{n+i;t}=y_i$ for $i=1,2,\cdots,m.$
\end{Definition}

Let $\mathcal A_{t_0}$ be the cluster algebra with principal coefficients at $\Sigma_{t_0}$ (or say at $t_0$), the authors in \cite{FZ3} introduced a $\mathbb Z^n$-grading of $\mathbb Z[x_{1;t_0}^{\pm1},\cdots,x_{n;t_0}^{\pm1},y_1,\cdots,y_n]$ as follows:
$$deg(x_{i;t_0})={\bf e}_i,~deg(y_j)=-{\bf b}_j,$$
where ${\bf e}_i$ is the $i$-th column vector of $I_n$, and ${\bf b}_j$ is the $j$-th column vector of $B_{t_0}$, $i,j=1,2,\cdots,n$. As shown in \cite{FZ3} every cluster variable $x_{i;t}$ of $\mathcal A_{t_0}$ is homogeneous with respect to this $\mathbb Z^n$-grading. The {\bf $g$-vector} of a cluster variable $x_{i;t}$ is defined to be its degree with respect to the $\mathbb Z^n$-grading and we write $deg(x_{i;t})=(g_{1i}^t,~g_{2i}^t,~\cdots,~g_{ni}^t)^{\top}\in\mathbb Z^n$. Let $X_t$ be a cluster of $\mathcal A_{t_0}$. We call the matrix $G_t=(deg(x_{1;t}),\cdots,deg(x_{n;t}))$ the {\bf $G$-matrix} of the cluster $X_t$. And $C_t$ is called the {\bf $C$-matrix} of $\Sigma_t$, whose column vectors are called {\bf $c$-vectors} (see \cite{FZ3}).

\begin{Proposition}\label{cgmutation}
Let $\mathcal A_{t_0}$ be a cluster algebra with principal coefficients at $t_0$,~$\Sigma_t$ be a seed of $\mathcal A_{t_0}$ and $\Sigma_{t^{\prime}}=\mu_{k}(\Sigma)$, then

(i)~$c_{ij}^{t^\prime}=\begin{cases}-c_{ij}^t~,&  j=k;\\  c_{ij}^t+b_{ik}^t[-b_{kj}^t]_++[c_{ik}^t]_+b_{kj}^t~,&otherwise.\end{cases}$

(ii)~(Proposition 6.6, \cite{FZ3}) $g_{ij}^{t^\prime}=\begin{cases}g_{ij}^t~,&  j\neq k;\\  -g_{ik}^t+\sum_{l=1}^n g_{il}^t[b_{lk}^t]_+-\sum_{l=1}^n-b_{il}^{t_0}[c_{lk}^t],&j=k.\end{cases}$
\end{Proposition}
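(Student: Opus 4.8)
The plan is to read off both recurrences directly from the definitions: part~(i) is a pure matrix computation from the mutation rule for extended exchange matrices, while part~(ii) is Proposition~6.6 of \cite{FZ3}, which I would recover by matching $\mathbb Z^n$-degrees in the exchange relation of the seed $\Sigma_t$.

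For~(i), the first thing I would note is that, because $\mathcal A_{t_0}$ has principal coefficients at $t_0$, the bottom $n$ rows of $\tilde B_{t_0}$ form $I_n$, and hence by construction $c_{ij}^t=\tilde b_{n+i,\,j}^{\,t}$ for all $i,j\in\{1,\dots,n\}$. Since the mutation direction satisfies $k\le n$, we always have $n+i\ne k$, so in the mutation formula \eqref{bmutation} for $\tilde B_t$ a bottom-block entry falls into the first branch only when $j=k$, giving $c_{ik}^{t'}=\tilde b_{n+i,k}^{t'}=-\tilde b_{n+i,k}^{t}=-c_{ik}^t$. For $j\ne k$ the second branch gives $c_{ij}^{t'}=\tilde b_{n+i,j}^{t}+\tilde b_{n+i,k}^{t}[-\tilde b_{kj}^{t}]_++[\tilde b_{n+i,k}^{t}]_+\,\tilde b_{kj}^{t}$, and substituting $\tilde b_{n+i,j}^{t}=c_{ij}^t$, $\tilde b_{n+i,k}^{t}=c_{ik}^t$ and (since $k\le n$) $\tilde b_{kj}^{t}=b_{kj}^t$ produces the stated expression. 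This step uses nothing beyond the definition of seed mutation.

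For~(ii), I would use that $g_{ij}^t$ is by definition the $i$-th coordinate of $\deg(x_{j;t})$ for the $\mathbb Z^n$-grading with $\deg(x_{i;t_0})=\mathbf{e}_i$ and $\deg(y_l)=-\mathbf{b}_l$. When $j\ne k$ we have $x_{j;t'}=x_{j;t}$, hence $\deg(x_{j;t'})=\deg(x_{j;t})$ and $g_{ij}^{t'}=g_{ij}^t$. When $j=k$, I would write the exchange relation in $\Sigma_t$ using $\tilde B_t$ (with $x_{n+l;t}=y_l$ and $m=n$):
\[ x_{k;t'}\,x_{k;t}=\prod_{l=1}^{n}x_{l;t}^{[b_{lk}^t]_+}\prod_{l=1}^{n}y_l^{[c_{lk}^t]_+}+\prod_{l=1}^{n}x_{l;t}^{[-b_{lk}^t]_+}\prod_{l=1}^{n}y_l^{[-c_{lk}^t]_+}. \]
Every cluster variable of $\mathcal A_{t_0}$ is homogeneous for this grading (see \cite{FZ3}), so the left-hand side is homogeneous of degree $\deg(x_{k;t'})+\deg(x_{k;t})$; since a sum of two nonzero homogeneous elements that is itself homogeneous forces the two summands to share that degree, I can read it off the first summand and get $\deg(x_{k;t'})+\deg(x_{k;t})=\sum_{l=1}^{n}[b_{lk}^t]_+\deg(x_{l;t})+\sum_{l=1}^{n}[c_{lk}^t]_+\deg(y_l)$. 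Solving for $\deg(x_{k;t'})$ and inserting $\deg(y_l)=-\mathbf{b}_l$ yields, coordinatewise, $g_{ik}^{t'}=-g_{ik}^t+\sum_{l=1}^{n}[b_{lk}^t]_+g_{il}^t-\sum_{l=1}^{n}[c_{lk}^t]_+b_{il}^{t_0}$, which is the recurrence in~(ii).

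I do not expect a real obstacle here: both computations are essentially bookkeeping. The only genuinely nontrivial ingredient is the homogeneity of all cluster variables of $\mathcal A_{t_0}$, invoked in~(ii) --- without it the degree-matching step would be meaningless --- and that is exactly the fact established in \cite{FZ3}. The points to handle carefully are the indexing of the bottom block in~(i) and, in~(ii), the choice of which of the two exchange monomials to read the degree off; note that sign-coherence of $c$-vectors is \emph{not} needed for this proposition, only for the simplifications that come afterwards.
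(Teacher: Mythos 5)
Your proof is correct and matches the paper's treatment: the paper's entire argument for (i) is the one-line remark that it follows from the mutation rule (\ref{bmutation}) applied to the bottom block of $\tilde B_t$ (which you spell out, correctly noting $n+i\neq k$), and (ii) is simply cited to Proposition 6.6 of \cite{FZ3}, whose standard degree-matching proof you reproduce. The only thing worth flagging is that the formula for $j=k$ as printed in the statement contains typographical slips (it should read $g_{ik}^{t'}=-g_{ik}^t+\sum_{l=1}^n g_{il}^t[b_{lk}^t]_+-\sum_{l=1}^n b_{il}^{t_0}[c_{lk}^t]_+$), and the version you derive is the correct one.
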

\begin{proof}
(i) It can be obtained from the equality (\ref{bmutation}).
\end{proof}

Let $\mathcal A_{t_0}$ be the cluster algebra with principal coefficients at $t_0$, each cluster variable $x_{i;t}$ can be expressed as a rational function $X_{i;t}^{t_0}\in\mathbb Q(x_{1;t_0},\cdots,x_{n;t_0},y_1,\cdots,y_n)$. $X_{i;t}^{t_0}$ is called the {\bf$X$-function} of cluster variable $x_{i;t}$.

\begin{Definition}
The {\bf $F$-polynomial} of $x_{i;t}$ is defined by $F_{i;t}^{t_0}=X_{i;t}^{t_0}|_{x_{1;t_0}=\cdots x_{n;t_0}=1}\in\mathbb Z[y_1,\cdots,y_m]$.
\end{Definition}

A vector ${\bf c}\in\mathbb Z^n$ is called {\bf sign-coherent} (\cite{FZ3}) if any two nonzero entries of {\bf c} have the same sign.
\begin{Proposition}\label{gf0} ~(\cite{FZ3,GHKK})
 Let $\mathcal A_{t_0}$ be a cluster algebra with principal coefficients at $t_0$, $x_{i;t}$ be a cluster variable of $\mathcal A_{t_0}$, then

(i) ~$x_{i;t}=\left(\prod\limits_{j=1}^nx_{j;t_0}^{g_{ji}^t}\right)F_{i;t}^{t_0}|_{\mathcal F}(\hat y_{1;t_0},\cdots,\hat y_{n;t_0})$, where $\hat y_{k;t_0}=y_k\prod\limits_{i=1}^n x_{i;t_0}^{b_{ik}^{t_0}}$ for $k\in\{1,2,\cdots,n\}$.

(ii)~  The following are equivalent:

(a)~ The column vectors of $C_t$ are sign-coherent;

(b)~ Each {\bf $F$-polynomial} has constant term 1;

(c)~ Each {\bf $F$-polynomial} has a unique monomial of maximal degree, which has coefficient $1$ and is divisible by all of other occurring monomials.
\end{Proposition}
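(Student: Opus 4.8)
The plan is to prove (i) by a homogeneity argument, to extract from it the recursion for $F$-polynomials, and then to settle the three-fold equivalence in (ii) by induction on the distance $d(t_0,t)$ of $t$ from $t_0$ in $\mathbb{T}_n$. For (i) I would avoid mutation altogether: by the Laurent phenomenon $X_{i;t}^{t_0}$ is a Laurent polynomial in $x_{1;t_0},\dots,x_{n;t_0}$ with coefficients in $\mathbb{Z}[y_1,\dots,y_n]$, say $X_{i;t}^{t_0}=\sum_{\mathbf a,\mathbf p}c_{\mathbf a,\mathbf p}\,x_{1;t_0}^{a_1}\cdots x_{n;t_0}^{a_n}\,y_1^{p_1}\cdots y_n^{p_n}$ with $\mathbf p\in\mathbb{Z}_{\ge 0}^{n}$. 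Since $x_{i;t}$ is homogeneous of degree $(g_{1i}^t,\dots,g_{ni}^t)^{\top}$ for the $\mathbb{Z}^n$-grading given by $\deg x_{j;t_0}=\mathbf e_j$ and $\deg y_k=-\mathbf b_k$, every monomial with $c_{\mathbf a,\mathbf p}\ne 0$ satisfies $a_l=g_{li}^t+\sum_j p_j b_{lj}^{t_0}$, hence equals $\big(\prod_l x_{l;t_0}^{g_{li}^t}\big)\prod_j\big(y_j\prod_l x_{l;t_0}^{b_{lj}^{t_0}}\big)^{p_j}=\big(\prod_l x_{l;t_0}^{g_{li}^t}\big)\prod_j\hat y_{j;t_0}^{\,p_j}$. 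Therefore $X_{i;t}^{t_0}=\big(\prod_l x_{l;t_0}^{g_{li}^t}\big)\,P(\hat y_{1;t_0},\dots,\hat y_{n;t_0})$ for the polynomial $P(y_1,\dots,y_n)=\sum_{\mathbf p}\big(\sum_{\mathbf a}c_{\mathbf a,\mathbf p}\big)y_1^{p_1}\cdots y_n^{p_n}$, and putting $x_{1;t_0}=\dots=x_{n;t_0}=1$, under which $\hat y_{j;t_0}\mapsto y_j$ and the $g$-vector monomial becomes $1$, identifies $P$ with $F_{i;t}^{t_0}$. This step uses neither positivity nor sign-coherence.

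Next, specializing the exchange relation $x_{k;t'}x_{k;t}=\prod_l x_{l;t}^{[b_{lk}^t]_+}\prod_i y_i^{[c_{ik}^t]_+}+\prod_l x_{l;t}^{[-b_{lk}^t]_+}\prod_i y_i^{[-c_{ik}^t]_+}$ (with $t'=\mu_k(t)$) at $x_{1;t_0}=\dots=x_{n;t_0}=1$ and applying (i) yields the $F$-polynomial recursion
\[
F_{k;t'}^{t_0}\,F_{k;t}^{t_0}=\prod_i y_i^{[c_{ik}^t]_+}\prod_l\big(F_{l;t}^{t_0}\big)^{[b_{lk}^t]_+}+\prod_i y_i^{[-c_{ik}^t]_+}\prod_l\big(F_{l;t}^{t_0}\big)^{[-b_{lk}^t]_+}.
\]
For (a)$\Rightarrow$(b) I would induct on $d(t_0,t)$: evaluating this recursion at $y=0$, the factor $\prod_i 0^{[c_{ik}^t]_+}$ (resp.\ $\prod_i 0^{[-c_{ik}^t]_+}$) equals $1$ exactly when the $k$-th column of $C_t$ is entrywise $\le 0$ (resp.\ $\ge 0$); by sign-coherence of that column, together with the standard fact that $c$-vectors are nonzero, precisely one of the two summands survives and the inductive hypothesis forces $F_{k;t'}^{t_0}|_{y=0}=1$. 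For (b)$\Rightarrow$(a), contrapositively, a column of some $C_t$ with entries of both signs makes \emph{both} factors $\prod_i 0^{[\pm c_{ik}^t]_+}$ vanish, so $F_{k;t'}^{t_0}|_{y=0}\cdot F_{k;t}^{t_0}|_{y=0}=0$, contradicting (b).

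Bringing (c) into the equivalence is the step I expect to be the main obstacle. Granting (a)/(b), I would rerun the induction while also recording, for each $F_{l;t}^{t_0}$, the componentwise supremum $\mathbf P_l^t$ of its occurring exponent vectors, with the aim of showing that $\mathbf P_l^t$ is itself an occurring exponent, has coefficient $1$, and divides every other one. At the step $t'=\mu_k(t)$, the summand of the recursion surviving at $y=0$ supplies the coefficient $1$ and divisibility by the constant monomial; the genuine difficulty is that the two summands must have comparable leading monomials, so that their sum still has a unique leading monomial. Using positivity to exclude cancellation, this comes down to sign-coherence of the vector $c_{\cdot k}^t+\sum_l b_{lk}^t\,\mathbf P_l^t$, which I would reduce to the sign-coherence already in hand via the linear duality between $G$- and $C$-matrices (Proposition~\ref{cgmutation}(ii) together with \cite{NZ}); the converse implication runs along the same comparison, and for that bookkeeping I would follow \cite{FZ3} closely. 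Every other step — part (i), the recursion, and (a)$\Leftrightarrow$(b) — uses only homogeneity, the Laurent phenomenon, and the substitution $y=0$; it is precisely the description of the leading monomial of an $F$-polynomial that forces one to invoke the full $g$-/$c$-vector duality, and there I would rely on \cite{FZ3,GHKK}.
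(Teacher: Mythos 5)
First, a point of reference: the paper does not prove this proposition at all --- it is quoted from \cite{FZ3} and \cite{GHKK} --- so there is no in-paper argument to measure you against. Within your proposal, part (i) and the equivalence (a)$\Leftrightarrow$(b) are correct and are essentially the standard arguments of \cite{FZ3}: the homogeneity computation forcing $\mathbf a=\mathbf g+B_{t_0}\mathbf p$ for every occurring Laurent monomial is exactly how the separation formula is derived, and the specialization $y=0$ of the $F$-polynomial recursion --- with exactly one of the two indicator products $\prod_i 0^{[\pm c_{ik}^t]_+}$ surviving under sign-coherence, and both vanishing when a column has mixed signs --- is the standard proof of (a)$\Leftrightarrow$(b). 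Your appeal to the nonvanishing of $c$-vectors is legitimate where it is used, since under hypothesis (a) each mutation gives $C_{t'}=C_tE$ with $\det E=-1$, whence $\det C_t=\pm1$.

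The genuine gap is condition (c). You correctly locate the obstruction: the two summands of the recursion have top exponent vectors differing by $c_{\cdot k}^t+\sum_l b_{lk}^t\,\mathbf P_l^t$, and (c) at $t'$ requires this vector to be sign-coherent. But you do not prove that sign-coherence, and it cannot be extracted from Proposition \ref{cgmutation}(ii) or the $C$/$G$ duality by linear algebra alone --- in \cite{FZ3} the corresponding statement (Conjecture 5.5 there) is left open precisely because this is another instance of the sign-coherence theorem, settled only in \cite{GHKK}. You also never actually argue the implication from (c) back to (a) or (b); ``runs along the same comparison'' is not an argument, and (c) by itself says nothing about the constant term. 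This matters downstream: Lemma \ref{mainlem} uses exactly the structure asserted in (c), writing $F_{i;t}^{t_0}=y_1^{k_{1i}}\cdots y_n^{k_{ni}}+u_i+1$ with every monomial of $u_i$ dividing the leading one. So while your (i) and (a)$\Leftrightarrow$(b) are complete, the part of the proposition that carries the weight in this paper remains a deferral to the references rather than a proof.
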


\section{Affirmation of Conjecture \ref{coj1} in skew-symmetrizable case}

 In this section, we give an affirmation to Conjecture \ref{coj1} (see Theorem \ref{mainthm}) in the skew-symmetrizable case depending on the results in \cite{GHKK}, that is, Theorem \ref{poconj} and Proposition \ref{gf} below.

\begin{Theorem}\label{poconj}
(Positivity of Laurent Phenomenon, \cite{GHKK})
Any cluster variable $x_{i;t}$ can be expressed as a Laurent polynomial in $\mathbb Z_{\geq0}[y_1,\cdots,y_m][x_{1;t_0}^{\pm1},\cdots,x_{n;t_0}^{\pm1}]$ in a skew-symmetrizable
cluster algebra $\mathcal A(\Sigma_{t_0})$ with initial seed $\Sigma_{t_0}$.
\end{Theorem}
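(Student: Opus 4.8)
The Laurent property itself is the classical Laurent phenomenon of Fomin--Zelevinsky, so the real content of the theorem is the positivity of the coefficients; the plan is to obtain it from the theory of scattering diagrams, in the spirit of Gross--Hacking--Keel--Kontsevich \cite{GHKK}. Fix the seed data attached to $\Sigma_{t_0}$ and work in $M_{\mathbb R}\cong\mathbb R^n$ with the (skew-symmetrizable) form determined by $B_{t_0}$. First I would write down the initial scattering diagram $\mathfrak D_{\mathrm{in}}$, whose walls are the hyperplanes $e_j^{\perp}$, $j=1,\dots,n$, each carrying the wall function $1+\hat y_{j;t_0}$ with $\hat y_{j;t_0}=y_j\prod_{i=1}^n x_{i;t_0}^{b_{ij}^{t_0}}$ as in Proposition \ref{gf0}. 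Then I would invoke the Kontsevich--Soibelman/Gross--Siebert existence-and-uniqueness theorem: there is an essentially unique consistent scattering diagram $\mathfrak D\supseteq\mathfrak D_{\mathrm{in}}$ --- ``consistent'' meaning that the path-ordered product of wall-crossing automorphisms around any small loop is the identity --- obtained from $\mathfrak D_{\mathrm{in}}$ by adjoining only walls on the positive side of the diagram.

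The second and crucial step is to show that $\mathfrak D$ can be chosen so that every wall function equals $1$ plus a power series (in the relevant monomials) with coefficients in $\mathbb Z_{\ge 0}$, i.e.\ positivity of the consistent scattering diagram. I would prove this by the standard order-by-order induction that constructs $\mathfrak D$: consistency modulo successively higher order is forced by adjoining new walls cancelling the discrepancy of the path-ordered products, and one must verify that the wall functions so produced have nonnegative coefficients. A perturbation argument localizes the inductive step to the rank-two case, where the needed positivity can be read off from the explicit commutator (Baker--Campbell--Hausdorff type) formula for the automorphisms involved, or equivalently from the Gross--Pandharipande--Siebert enumeration of tropical discs. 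This preservation of positivity under the consistency algorithm is the technical heart of the argument and the place where I expect the real obstacle to lie; the surrounding steps are essentially formal.

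Finally I would transport positivity back to the cluster algebra. The cluster complex embeds in $M_{\mathbb R}$ as the fan of $g$-vector chambers, with $\Sigma_{t_0}$ the distinguished initial chamber. For a lattice point $q$ and a generic basepoint in a chamber, the theta function $\vartheta_q$ is the sum over all broken lines with asymptotic direction $q$ ending at that basepoint, each broken line contributing the Laurent monomial it accumulates; since every bend occurs along a wall of $\mathfrak D$ and multiplies the carried monomial by a coefficient of a wall function, positivity of $\mathfrak D$ forces each broken-line monomial, and hence $\vartheta_q$, to have coefficients in $\mathbb Z_{\ge 0}$. Using that with principal coefficients the cluster variable $x_{i;t}$ is the theta function attached to its $g$-vector $(g_{1i}^t,\dots,g_{ni}^t)^{\top}$ --- equivalently, matching the broken-line expansion of $\vartheta_q$ against the separation formula of Proposition \ref{gf0}(i) --- one obtains $x_{i;t}\in\mathbb Z_{\ge 0}[y_1,\dots,y_m][x_{1;t_0}^{\pm1},\dots,x_{n;t_0}^{\pm1}]$ together with nonnegativity of every $F$-polynomial $F_{i;t}^{t_0}$, which is the assertion of the theorem. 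In the skew-symmetric special case one may alternatively invoke the categorical proof, in which $F_{i;t}^{t_0}$ is computed by Euler characteristics of quiver Grassmannians and positivity is immediate, but the scattering-diagram route is what yields the full skew-symmetrizable statement above.
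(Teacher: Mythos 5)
The paper offers no proof of Theorem \ref{poconj}: it is imported verbatim as a black box from \cite{GHKK}, and your outline is a faithful reconstruction of the scattering-diagram argument given there (consistent completion of the initial diagram, positivity of wall functions via order-by-order construction and reduction to rank two, broken lines and theta functions, and the identification of cluster variables with the theta functions attached to their $g$-vectors). So your route coincides with that of the cited source, and you correctly locate the genuine technical content in the positivity of the consistent scattering diagram rather than in the surrounding formal steps.
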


\begin{Proposition}\label{gf}
(\cite{GHKK}) Let $\mathcal A_{t_0}$ be a skew-symmetrizable cluster algebra with principal coefficients at $t_0$, $x_{i;t}$ be a cluster variable of $\mathcal A_{t_0}$, then
 the column vectors of $C_t$ are sign-coherent.
\end{Proposition}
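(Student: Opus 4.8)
The assertion of Proposition \ref{gf} is exactly the Fomin--Zelevinsky sign-coherence conjecture for $c$-vectors in the skew-symmetrizable case, and it is a theorem of Gross--Hacking--Keel--Kontsevich; were one to reprove it, the natural plan would be to run their \emph{cluster scattering diagram} argument. The first step is to fix the data attached to $t_0$: the lattice $N=\mathbb Z^{n}$ with its standard basis $e_1,\dots,e_n$, a skew-symmetrizer $S$ of $B_{t_0}$ (this is precisely where skew-symmetrizability enters), the skew-symmetric bilinear form on $N$ built from $SB_{t_0}$, the dual lattice $M=N^{*}$, and the associated pro-nilpotent Lie algebra of monomials $z^{m}$ on which wall-crossing automorphisms act. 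The second step is to construct, order by order in the maximal ideal of the completed algebra, the \emph{consistent} cluster scattering diagram $\mathfrak D_{t_0}\subset M_{\mathbb R}$: a locally finite collection of walls (rational codimension-one cones) each decorated with a wall-crossing automorphism supported on a single ray, glued by the Kontsevich--Soibelman procedure so that every path-ordered product of wall-crossings depends only on the endpoints of the path.

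The link with the $c$-vectors is the chamber/wall dictionary of \cite{GHKK}. The chambers of $\mathfrak D_{t_0}$ reachable from the distinguished chamber $\mathcal C^{+}$ (the image of the positive orthant) are in bijection with the seeds $\Sigma_t$; the rays of the chamber $\mathcal C_t$ are, up to the usual sign convention, the columns of $G_t$; and, writing $\Sigma_{t'}=\mu_k(\Sigma_t)$, the facet shared by $\mathcal C_t$ and $\mathcal C_{t'}$ lies on the hyperplane orthogonal to the $k$-th column of $C_t$, so each $c$-vector is realized, up to sign, as the primitive normal vector of such a wall. Sign-coherence then falls out of the structure of $\mathfrak D_{t_0}$: all its walls are produced from the initial walls $e_i^{\perp}$ by Kontsevich--Soibelman collisions, and a collision of two walls whose primitive normals lie in $\pm\mathbb Z_{\ge 0}^{\,n}$ produces new walls with the same property (one only ever takes non-negative combinations of the colliding normals), so by induction every wall normal of $\mathfrak D_{t_0}$ is sign-coherent; in particular the $k$-th column of $C_t$ is. I would also note the alternative available in the skew-symmetric case, namely the quiver-with-potential approach of Derksen--Weyman--Zelevinsky (or the cluster-category / Donaldson--Thomas arguments of Nagao and Plamondon), in which $c$-vectors are dimension vectors of explicit Jacobian-algebra modules and sign-coherence becomes the statement that each such module is either a module or a shift of a module but not a mixture; the skew-symmetrizable case can then be recovered from the skew-symmetric one by an unfolding argument.

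The decisive obstacle is, without question, the existence and especially the \emph{consistency} of $\mathfrak D_{t_0}$ together with the positivity of its wall functions --- this is the technical heart of \cite{GHKK}; once that is granted, the chamber/wall versus seed/$c$-vector dictionary and the extraction of the sign are comparatively formal. A subsidiary point that must be handled is the mutation-invariance of the construction --- that the scattering diagram attached to $\Sigma_{t_0}$ agrees with the one attached to a mutated seed after the natural piecewise-linear change of coordinates --- since this is what licenses reading the $c$-vector of an \emph{arbitrary} seed off a wall rather than only the $c$-vectors of seeds adjacent to $t_0$. Finally, I remark that Theorem \ref{poconj} by itself does not visibly shortcut the proof: it yields that each $F$-polynomial has non-negative coefficients, so via Proposition \ref{gf0}(ii) one would be left only to show that the constant term of each $F$-polynomial equals $1$, but pinning that constant term down still seems to require the full force of the scattering-diagram (or quiver-with-potential) machinery.
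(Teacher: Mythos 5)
The paper does not prove this proposition at all — it is imported verbatim as a theorem of \cite{GHKK} (sign-coherence of $c$-vectors in the skew-symmetrizable case), and your proposal is an accurate high-level sketch of exactly the scattering-diagram argument used in that reference, including the correct identification of the consistency of $\mathfrak D_{t_0}$ as the genuine technical burden and the correct observation that Theorem \ref{poconj} plus Proposition \ref{gf0}(ii) alone would still leave the constant term of the $F$-polynomials unpinned. So you are taking the same route as the source the paper relies on; no discrepancy to report.
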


\begin{Proposition}\label{cg}(\cite{CL,NZ,NT})
Let $\mathcal A_{t_0}$ be a skew-symmetrizable cluster algebra with principal coefficients $t_0$ and with skew-symmetrizer $S$, ~$\Sigma_{t}$ be a seed of $\mathcal A_{t_0}$, then $G_tB_tS^{-1}G_t^{\top}=B_{t_0}S^{-1}$ and $SC_tS^{-1}G_t^{\top}=I_n$ and $det(G_t)=\pm1$.
\end{Proposition}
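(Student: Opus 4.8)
The plan is to prove all three identities simultaneously by induction on the distance $d(t_0,t)$ in $\mathbb T_n$, and it pays to carry one extra identity along the way: alongside (B) $SC_tS^{-1}G_t^\top = I_n$ I would propagate the compatibility (A) $G_tB_t = B_{t_0}C_t$. The base case $t=t_0$ is immediate since $G_{t_0}=C_{t_0}=I_n$. For the inductive step, fix $t'=\mu_k(t)$ and let $\varepsilon\in\{\pm 1\}$ be the common sign of the $k$-th column of $C_t$, which exists by the sign-coherence of Proposition \ref{gf}.

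The one real computation is to rewrite the entrywise mutation rules of Proposition \ref{cgmutation} as matrix identities. Introduce $E=E_\varepsilon^{t,k}$, the identity matrix with its $k$-th column changed so that $E_{kk}=-1$ and $E_{ik}=[-\varepsilon b_{ik}^t]_+$ for $i\neq k$, and $F=F_\varepsilon^{t,k}$, the identity matrix with its $k$-th row changed so that $F_{kk}=-1$ and $F_{kj}=[\varepsilon b_{kj}^t]_+$ for $j\neq k$. Since sign-coherence makes $[c_{lk}^t]_+$ equal to $c_{lk}^t$ (if $\varepsilon=+1$) or $0$ (if $\varepsilon=-1$), and since (A) at $t$ lets one replace $\sum_l b_{il}^{t_0}c_{lk}^t$ by $\sum_l g_{il}^tb_{lk}^t$ in the former case, Proposition \ref{cgmutation}(ii) collapses to $G_{t'}=G_tE$; likewise Proposition \ref{cgmutation}(i) gives $C_{t'}=C_tF$; and the exchange-matrix mutation rule gives $B_{t'}=EB_tF$ (this holds for either sign, being the standard description of $\mu_k$ on a fixed sign pattern). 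Along with this I would record $E^2=F^2=I_n$ and $\det E=\det F=-1$, both by expanding along row/column $k$.

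From here everything is short. For (A): $G_{t'}B_{t'}=G_tE\,EB_tF=G_tB_tF=B_{t_0}C_tF=B_{t_0}C_{t'}$. For (B) the key point — and the only place the symmetrizer is used — is the conjugation identity $SFS^{-1}=E^\top$; with $S=\mathrm{diag}(s_1,\dots,s_n)$ this reduces to $\tfrac{s_k}{s_j}[\varepsilon b_{kj}^t]_+=[-\varepsilon b_{jk}^t]_+$ for $j\neq k$, which holds because $s_kb_{kj}^t=-s_jb_{jk}^t$ (skew-symmetry of $SB_t$) lets one pull the positive scalar $s_k/s_j$ through $[\,\cdot\,]_+$. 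Given this, $FS^{-1}E^\top=S^{-1}(SFS^{-1})E^\top=S^{-1}(E^\top)^2=S^{-1}$, so $SC_{t'}S^{-1}G_{t'}^\top=SC_t\,FS^{-1}E^\top\,G_t^\top=SC_tS^{-1}G_t^\top=I_n$. Finally $\det G_{t'}=\det G_t\cdot\det E=-\det G_t$ gives $\det G_t=\pm 1$ by induction; and the remaining identity $G_tB_tS^{-1}G_t^\top=B_{t_0}S^{-1}$ follows from (A) and (B) together, since $G_tB_tS^{-1}G_t^\top=B_{t_0}C_tS^{-1}G_t^\top$ while (B) gives $C_tS^{-1}G_t^\top=S^{-1}$.

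The main obstacle is organizational rather than conceptual: one must get the two different shapes of $E$ and $F$ (and their transposes) exactly right, and must notice that (A) has to be carried through the induction because the reduction of the $g$-vector mutation to $G_{t'}=G_tE$ in the case $\varepsilon=+1$ already invokes (A) at $t$. Everything past the matrix-form setup — including the role of skew-symmetrizability — is one or two lines each.
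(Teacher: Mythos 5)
Your proof is correct. Note that the paper itself gives no argument for this proposition -- it is quoted from the cited sources \cite{CL,NZ,NT} -- and what you have written is essentially the standard Nakanishi--Zelevinsky tropical-duality argument from those references: induction along $\mathbb T_n$, the elementary matrices $E_\varepsilon$, $F_\varepsilon$ encoding the mutation of $G_t$, $C_t$, $B_t$ under sign-coherence, the conjugation $SF S^{-1}=E^\top$ supplied by the skew-symmetrizer, and the auxiliary identity $G_tB_t=B_{t_0}C_t$ carried through the induction (you are right that this last identity is genuinely needed at $t$ to reduce the $g$-vector recursion to $G_{t'}=G_tE$ when $\varepsilon=+1$). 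The only inputs are Proposition \ref{cgmutation} and the sign-coherence of Proposition \ref{gf}, exactly as in the literature, so there is nothing to object to.
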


\begin{Lemma}\label{mainlem}
Let $\mathcal A_{t_0}$ be a cluster algebra with principal coefficients at $t_0$, $\Sigma_t=(X_t,\tilde B_t)$ be a seed of $\mathcal A_{t_0}$. If $G_t=I_n$, then $\Sigma_t=\Sigma_{t_0}$.
\end{Lemma}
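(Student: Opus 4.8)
The plan is to reduce the claim $\Sigma_t = \Sigma_{t_0}$ to an equality of $G$-matrices and then exploit the grading/Laurent expansion from Proposition~\ref{gf0}(i) together with positivity. First I would observe that $G_t = I_n$ means every initial cluster variable's $g$-vector appears among the columns of $G_t$; more precisely, $\deg(x_{i;t}) = {\bf e}_i$ for each $i$. Using Proposition~\ref{gf0}(i), this gives $x_{i;t} = x_{i;t_0}\cdot F_{i;t}^{t_0}|_{\mathcal F}(\hat y_{1;t_0},\dots,\hat y_{n;t_0})$. The goal is to show each such $F$-polynomial is $1$, which would force $x_{i;t} = x_{i;t_0}$ for all $i$, hence $X_t = X_{t_0}$; then since a seed's exchange matrix is recovered from exchange relations among its cluster variables (or, in the principal-coefficient setting, from the $G$-matrix via Proposition~\ref{cg}, which gives $B_t = B_{t_0}$ and $C_t = I_n$ when $G_t = I_n$), the seeds coincide.

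The key step is therefore to prove $F_{i;t}^{t_0} = 1$ for all $i$ when $G_t = I_n$. I would argue as follows. By the Laurent phenomenon in the form of Theorem~\ref{poconj} applied both at $t_0$ and, by symmetry of the setup, expressing $x_{i;t_0}$ in terms of the cluster at $t$, each $x_{i;t_0}$ is a Laurent polynomial with nonnegative coefficients in the $x_{j;t}$. Combined with $x_{i;t} = x_{i;t_0}\cdot F_{i;t}^{t_0}|_{\mathcal F}(\hat y_{\bullet;t_0})$ and the fact that $F_{i;t}^{t_0}$ has constant term $1$ (Proposition~\ref{gf0}(ii), using sign-coherence which holds by Proposition~\ref{gf}), one sees that $x_{i;t}/x_{i;t_0}$ is a Laurent polynomial in the $x_{j;t_0}$ with constant term $1$ whose other terms involve the $\hat y_{k;t_0} = y_k \prod_j x_{j;t_0}^{b_{jk}^{t_0}}$. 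The point is to show this forces the polynomial part to be trivial: if some $F_{i;t}^{t_0} \neq 1$, then $x_{i;t}$ is a genuine (non-monomial) Laurent polynomial in $x_{1;t_0},\dots,x_{n;t_0}$ divisible by $x_{i;t_0}$, while simultaneously $x_{i;t_0}$ must be expressible as a nonnegative Laurent polynomial in the cluster variables $x_{j;t}$. I would then extract a contradiction from the interplay of these two divisibility/positivity constraints — for instance, by specializing all $y_k = 0$: this kills all $\hat y_{k;t_0}$, so $x_{i;t}|_{y=0} = x_{i;t_0}$ exactly, and one can track that under $y=0$ the seed mutations degenerate to the coefficient-free situation where the only seed with $G$-matrix $I_n$ is the initial one.

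Alternatively, and perhaps more cleanly, I would use Proposition~\ref{cg} directly when available, but since Lemma~\ref{mainlem} is stated for general (not necessarily skew-symmetrizable) principal-coefficient algebras, the grading argument above is the robust route: $G_t = I_n$ together with the homogeneity of cluster variables and the separation-of-additions formula pins down each $x_{i;t}$ as $x_{i;t_0}$ times a unit-constant-term $F$-polynomial evaluated at the $\hat y$'s, and a positivity/specialization argument rules out any nontrivial $F$-polynomial. The main obstacle I anticipate is making the last step fully rigorous without circularity: one must be careful that the argument showing ``$F_{i;t}^{t_0}=1$ forces $x_{i;t}=x_{i;t_0}$'' does not secretly assume the conclusion, and that the reverse Laurent expansion of $x_{i;t_0}$ in the $t$-cluster is legitimately available. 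Handling that cleanly — likely via the $y=0$ specialization reducing to the well-understood coefficient-free $g$-vector theory, where distinct seeds have distinct $G$-matrices — is where the real work lies.
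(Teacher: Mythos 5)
Your setup coincides with the paper's: from $G_t=I_n$ and Proposition \ref{cg} you get $B_t=B_{t_0}$ and $C_t=I_n$, you invoke the separation formula of Proposition \ref{gf0}(i) in both directions (viewing $\Sigma_{t_0}$ and $\Sigma_t$ each as an initial seed with principal coefficients), and you correctly reduce everything to showing $F_{i;t}^{t_0}=1$. But the step that actually proves $F_{i;t}^{t_0}=1$ is missing, and neither route you sketch can supply it. The specialization $y_1=\cdots=y_n=0$ gives no information: every $\hat y_{k;t_0}$ carries a factor $y_k$ and $F_{i;t}^{t_0}$ has constant term $1$, so $x_{i;t}|_{y=0}=x_{i;t_0}$ holds for \emph{any} cluster variable with $g$-vector ${\bf e}_i$, whether or not its $F$-polynomial is trivial; the specialization is insensitive to exactly what you need to detect. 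The fallback appeal to ``coefficient-free $g$-vector theory, where distinct seeds have distinct $G$-matrices'' is circular: injectivity of the seed-to-$G$-matrix (equivalently, by Proposition \ref{cg}, seed-to-$C$-matrix) assignment in the skew-symmetrizable case is precisely what Lemma \ref{mainlem} and Theorem \ref{mainthm} are establishing, so it cannot be quoted as known.

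The paper closes this gap by a concrete positivity argument that your proposal does not reach. Substituting the expansion (\ref{eqt0}) of $x_{j;t_0}$ in the $t$-cluster into the expansion (\ref{eqt}) of $x_{i;t}$ in the $t_0$-cluster, cancelling $x_{i;t}$, and setting $x_{1;t}=\cdots=x_{n;t}=1$ yields the identity
\[
1=F_{i;t_0}^{t}(y_1,\dots,y_n)\cdot F_{i;t}^{t_0}\left(\tfrac{h_1}{g_1},\dots,\tfrac{h_n}{g_n}\right),
\]
where $h_j,g_j\in\mathbb Z_{\geq0}[y_1,\dots,y_n]$ are nonzero; here Theorem \ref{poconj} guarantees all $F$-polynomials lie in $\mathbb Z_{\geq0}[y_1,\dots,y_n]$. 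Writing $F_{i;t}^{t_0}=y_1^{k_{1i}}\cdots y_n^{k_{ni}}+u_i+1$ with the leading monomial maximal and divisible by all others (Proposition \ref{gf} together with Proposition \ref{gf0}(ii)) and $F_{i;t_0}^{t}=w_i+1$, clearing denominators turns the identity into a vanishing sum of polynomials with nonnegative coefficients, one of which, $h_1^{k_{1i}}\cdots h_n^{k_{ni}}$, is nonzero unless $(k_{1i},\dots,k_{ni})=0$ --- a contradiction. You would need to supply an argument of this kind (or another genuinely non-circular mechanism) to finish; as written, the proposal stops where the real difficulty begins.
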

\begin{proof}
By Proposition \ref{cg} and $G_t=I_n=G_{t_0}$, we obtain  $B_t=B_{t_0}$ and $C_t=I_n$.
So we can also view $\mathcal A_{t_0}$ as a cluster algebra with principal coefficients at $t$.
Let $\Sigma_{t_1}$ be a seed of $\mathcal A_{t_0}$, the {\bf $C$}-matrix of $\Sigma_{t_1}$ is always the lower part of $\tilde B_{t_1}$,  no matter which seed ($\Sigma_{t_0}$ or $\Sigma_t$) is chosen as initial seed. By Proposition \ref{cg} again, we know the {\bf $G$}-matrices of $\Sigma_{t_1}$ with respect to $\Sigma_{t_0}$ and $\Sigma_t$ are the same.
By Proposition \ref{gf0} (i), we know
\begin{eqnarray}\label{eqt}
x_{i;t}=x_{i;t_0}F_{i;t}^{t_0}(y_1\prod\limits_{l=1}^n x_{l;t_0}^{b_{l1}^{t_0}},\cdots,y_n\prod\limits_{l=1}^n x_{l;t_0}^{b_{ln}^{t_0}}),
 \end{eqnarray}
 by viewing $\Sigma_{t_0}$ as an initial seed and
  \begin{eqnarray}\label{eqt0}
  x_{j;t_0}=x_{j;t}F_{j;t_0}^{t}(y_1\prod\limits_{l=1}^n x_{l;t}^{b_{l1}^{t}},\cdots,y_n\prod\limits_{l=1}^n x_{l;t}^{b_{ln}^{t}}),
   \end{eqnarray}
  by viewing $\Sigma_{t}$ as an initial seed for $i,j\in\{1,2,\cdots,n\}$.
  Replacing $x_{1;t_0},\cdots,x_{n;t_0}$ in (\ref{eqt}) with the ones in (\ref{eqt0}), we obtain

$ x_{i;t}=x_{i;t}F_{i;t_0}^{t}(y_1\prod\limits_{l=1}^n x_{l;t}^{b_{l1}^{t}},\cdots,y_n\prod\limits_{l=1}^n x_{l;t}^{b_{ln}^{t}})\cdot\nonumber\\
 F_{i;t}^{t_0}\left(y_1\prod\limits_{l=1}^n (x_{l;t}F_{l;t_0}^{t}(y_1\prod\limits_{k=1}^n x_{k;t}^{b_{k1}^{t}},\cdots,y_n\prod\limits_{k=1}^n x_{k;t}^{b_{kn}^{t}}))^{b_{l1}^{t_0}},\cdots,y_n\prod\limits_{l=1}^n (x_{l;t}F_{l;t_0}^{t}(y_1\prod\limits_{k=1}^n x_{k;t}^{b_{k1}^{t}},\cdots,y_n\prod\limits_{k=1}^n x_{k;t}^{b_{kn}^{t}}))^{b_{ln}^{t_0}}\right)\nonumber,$
which implies that

$ 1=F_{i;t_0}^{t}(y_1\prod\limits_{l=1}^n x_{l;t}^{b_{l1}^{t}},\cdots,y_n\prod\limits_{l=1}^n x_{l;t}^{b_{ln}^{t}})\cdot\nonumber\\
 F_{i;t}^{t_0}\left(y_1\prod\limits_{l=1}^n (x_{l;t}F_{l;t_0}^{t}(y_1\prod\limits_{k=1}^n x_{k;t}^{b_{k1}^{t}},\cdots,y_n\prod\limits_{k=1}^n x_{k;t}^{b_{kn}^{t}}))^{b_{l1}^{t_0}},\cdots,y_n\prod\limits_{l=1}^n (x_{l;t}F_{l;t_0}^{t}(y_1\prod\limits_{k=1}^n x_{k;t}^{b_{k1}^{t}},\cdots,y_n\prod\limits_{k=1}^n x_{k;t}^{b_{kn}^{t}}))^{b_{ln}^{t_0}}\right)\nonumber,$

Take $x_{1;t}=x_{2;t}=\cdots=x_{n;t}=1$ in the above equality, then we have
  \begin{eqnarray}\label{yequation}
&&1=F_{i;t_0}^{t}(y_1,\cdots,y_n)\cdot
 F_{i;t}^{t_0}\left(y_1\prod\limits_{l=1}^n (F_{l;t_0}^{t}(y_1,\cdots,y_n))^{b_{l1}^{t_0}},\cdots,y_n\prod\limits_{l=1}^n (F_{l;t_0}^{t}(y_1,\cdots,y_n)^{b_{ln}^{t_0}} \right),
  \end{eqnarray}
  By Theorem \ref{poconj} and the definition of {\bf $F$}-polynomial,  we have $F_{j;t}^{t_0},~F_{j;t_0}^{t}\in\mathbb Z_{\geq0}[y_1,\cdots,y_n]$. Then
  we know each $y_j\prod\limits_{l=1}^n (F_{j;t_0}^{t}(y_1,\cdots,y_n))^{b_{lj}^{t_0}}$ has the form of $\frac{h_j}{g_j}$, where $h_j,g_j\in\mathbb Z_{\geq0}[y_1,\cdots,y_n],~j=1,\cdots,n$.
  We claim that $F_{i;t}^{t_0}(y_1,\cdots,y_n)=1$. If $F_{i;t}^{t_0}(y_1,\cdots,y_n)\neq1$, then
  by Proposition \ref{gf} and Proposition \ref{gf0} (ii), we can assume $F_{i;t}^{t_0}=y_1^{k_{1i}}y_2^{k_{2i}}\cdots y_n^{k_{ni}}+u_i(y_1,\cdots,y_n)+1$, where $(k_{1i},\cdots,k_{ni})\neq0$, ~$u_i\in\mathbb Z_{\geq0}[y_1,\cdots,y_n]$ and each monomial in $u_i$ is a factor of $y_1^{k_{1i}}y_2^{k_{2i}}\cdots y_n^{k_{ni}}$. And we assume $F_{i;t_0}^t=w_i(y_1,\cdots,y_n)+1$, where $w_i\in\mathbb Z_{\geq0}[y_1,\cdots,y_n]$.

  Then the equality (\ref{yequation}) has the form of
  \begin{eqnarray}
  1=(w_i+1)\left((\frac{h_1}{g_1})^{k_{1i}}\cdots (\frac{h_n}{g_n})^{k_{ni}}+u_i(\frac{h_1}{g_1},\cdots,\frac{h_n}{g_n})+1\right),\nonumber
  \end{eqnarray}
  thus  $g_1^{k_{1i}}g_2^{k_{2i}}\cdots g_n^{k_{ni}}=(w_i+1)(h_1^{k_{1i}}\cdots h_n^{k_{ni}}+g_1^{k_{1i}}\cdots g_n^{k_{ni}}u_i(\frac{h_1}{g_1},\cdots,\frac{h_n}{g_n})+g_1^{k_{1i}}\cdots g_n^{k_{ni}}).$

  We obtain  that
  \begin{eqnarray}\label{linearly}
 0&=&w_i\left(h_1^{k_{1i}}\cdots h_n^{k_{ni}}+g_1^{k_{1i}}g_2^{k_{2i}}\cdots g_n^{k_{ni}}u_i(\frac{h_1}{g_1},\cdots,\frac{h_n}{g_n})+g_1^{k_{1i}}\cdots g_n^{k_{ni}}\right)+\nonumber\\
&&\left(h_1^{k_{1i}}\cdots h_n^{k_{ni}}+g_1^{k_{1i}}g_2^{k_{2i}}\cdots g_n^{k_{ni}}u_i(\frac{h_1}{g_1},\cdots,\frac{h_n}{g_n})\right).
  \end{eqnarray}

  Note  that $g_1^{k_{1i}}g_2^{k_{2i}}\cdots g_n^{k_{ni}}u_i(\frac{h_1}{g_1},\cdots,\frac{h_n}{g_n})\in\mathbb Z_{\geq0}[y_1,\cdots,y_n]$, since each monomial occurring in $u_i(y_1,\cdots,y_n)$ is a factor of $y_1^{k_{1i}}y_2^{k_{2i}}\cdots y_n^{k_{ni}}$.
  Then the equality (\ref{linearly}) is a contradiction, since each term in the equality (\ref{linearly}) is an element in $\mathbb Z_{\geq0}[y_1,\cdots,y_n]$ and $h_1^{k_{1i}}\cdots h_n^{k_{ni}}\neq0$ by  the fact that $h_j\neq0,~j=1,\cdots,n$.
  Thus we must have  $F_{i;t}^{t_0}(y_1,\cdots,y_n)=1$, which implies $x_{i;t}=x_{i;t_0}$ by the equality (\ref{eqt}). Then $\Sigma_t=\Sigma_{t_0}$.
\end{proof}

Now, we obtain the main result as follows.
\begin{Theorem}\label{mainthm}
Let $\mathcal A_{t_0}$ be the cluster algebra with principal coefficients at $t_0$, $\Sigma_{t_1}$ and $\Sigma_{t_2}$ be two seeds of $\mathcal A_{t_0}$. If $C_{t_1}=C_{t_2}$, then $\Sigma_{t_1}=\Sigma_{t_2}$.
\end{Theorem}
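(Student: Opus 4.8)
The plan is to reduce Theorem \ref{mainthm} to Lemma \ref{mainlem} by exploiting the tight relationship between $C$-matrices and $G$-matrices recorded in Proposition \ref{cg}. First I would observe that by Proposition \ref{cg}, for any seed $\Sigma_t$ of a skew-symmetrizable cluster algebra with principal coefficients and skew-symmetrizer $S$, we have $SC_tS^{-1}G_t^{\top}=I_n$; hence $G_t^{\top}=SC_t^{-1}S^{-1}$ (the relation also shows $C_t$ is invertible over $\mathbb{Q}$, with $\det C_t=\pm1$ since $\det G_t=\pm1$), so $G_t$ is completely determined by $C_t$. Consequently, if $C_{t_1}=C_{t_2}$ then $G_{t_1}=G_{t_2}$, and moreover, using $G_tB_tS^{-1}G_t^{\top}=B_{t_0}S^{-1}$, we get $B_{t_1}=G_{t_1}^{-1}B_{t_0}S^{-1}(G_{t_1}^{\top})^{-1}S = G_{t_2}^{-1}B_{t_0}S^{-1}(G_{t_2}^{\top})^{-1}S = B_{t_2}$, so the full extended exchange matrices agree, $\tilde B_{t_1}=\tilde B_{t_2}$. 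Thus it remains to show $X_{t_1}=X_{t_2}$.

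The key reduction is then to consider the composite mutation sequence taking $\Sigma_{t_1}$ to $\Sigma_{t_2}$. Here I would invoke the standard fact (a consequence of Proposition \ref{cg} applied with $\Sigma_{t_1}$ as a new initial seed, exactly as in the proof of Lemma \ref{mainlem}) that $C$-matrices and $G$-matrices transform covariantly under changing the initial seed: if $\Sigma_{t_2}$ is reached from $\Sigma_{t_1}$ by a mutation sequence, then the $G$-matrix of $\Sigma_{t_2}$ computed relative to $\Sigma_{t_1}$ is $G_{t_1}^{-1}G_{t_2}$, and likewise the $C$-matrix relative to $\Sigma_{t_1}$ is a product expression in $C_{t_1}, C_{t_2}$. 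Since $G_{t_1}=G_{t_2}$, the $G$-matrix of $\Sigma_{t_2}$ relative to $\Sigma_{t_1}$ equals $I_n$. Now apply Lemma \ref{mainlem} with $\Sigma_{t_1}$ playing the role of $t_0$: a seed whose $G$-matrix (relative to the initial seed $\Sigma_{t_1}$) is the identity must coincide with $\Sigma_{t_1}$. Therefore $\Sigma_{t_2}=\Sigma_{t_1}$, which is the desired conclusion.

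The main obstacle I anticipate is justifying that Lemma \ref{mainlem} can legitimately be applied with $\Sigma_{t_1}$ as the initial seed. Lemma \ref{mainlem} as stated concerns the cluster algebra $\mathcal A_{t_0}$ with principal coefficients at $t_0$, and its proof crucially uses the grading-based $g$-vector formalism and positivity (Theorem \ref{poconj}) relative to $t_0$. To reuse it, I must argue that $\mathcal A_{t_1}$ — the same cluster pattern but viewed with $\Sigma_{t_1}$ as its principal-coefficient initial seed — is again a skew-symmetrizable cluster algebra with principal coefficients, so that Theorem \ref{poconj} and Proposition \ref{gf} apply there too. This is the content of the opening lines of the proof of Lemma \ref{mainlem} (``we can also view $\mathcal A_{t_0}$ as a cluster algebra with principal coefficients at $t$''): once $C_{t_1}=I_n$ relative to itself — which holds trivially — the extended exchange matrix at $t_1$ has identity lower block, and skew-symmetrizability is mutation-invariant, so the hypotheses of Lemma \ref{mainlem} are met. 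A secondary technical point is confirming the change-of-initial-seed covariance of $G$-matrices; this follows from Proposition \ref{cg} (the identities $SC_tS^{-1}G_t^{\top}=I_n$ and $G_tB_tS^{-1}G_t^{\top}=B_{t_0}S^{-1}$ determine $G_t$ and $B_t$ from $C_t$ and are compatible with composition of mutation sequences), essentially the same observation already used inside the proof of Lemma \ref{mainlem}.
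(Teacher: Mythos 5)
Your first step --- using Proposition \ref{cg} to deduce $G_{t_1}=G_{t_2}$ and $B_{t_1}=B_{t_2}$ from $C_{t_1}=C_{t_2}$ --- is exactly how the paper begins. The gap is in your reduction to Lemma \ref{mainlem}. You want to apply that lemma ``with $\Sigma_{t_1}$ playing the role of $t_0$,'' but Lemma \ref{mainlem} is a statement about a cluster algebra with \emph{principal coefficients at its initial seed}, and $\mathcal A_{t_0}$ does not have principal coefficients at $t_1$: the lower block of $\tilde B_{t_1}$ is $C_{t_1}$, which is in general not $I_n$. Your remark that ``$C_{t_1}=I_n$ relative to itself holds trivially'' conflates two different objects: the seed $\Sigma_{t_1}$ of $\mathcal A_{t_0}$ (whose coefficient rows are $C_{t_1}$) and the initial seed of the auxiliary algebra with principal coefficients at $t_1$ --- a different cluster algebra, with different coefficient rows at every vertex and hence a priori different cluster variables. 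Running Lemma \ref{mainlem} inside that auxiliary algebra would only show that two of \emph{its} seeds coincide; transferring the conclusion back to $\mathcal A_{t_0}$ requires the separation formula for arbitrary coefficients, which is neither in this paper nor supplied by you. Contrast this with the re-basing inside the proof of Lemma \ref{mainlem} itself, which is legitimate precisely because there $G_t=I_n$ forces $C_t=I_n$, so $\tilde B_t$ literally equals $\begin{pmatrix}B_{t_0}\\ I_n\end{pmatrix}$. A second, related gap: the change-of-initial-seed formula $G_t^{t_1}=G_{t_1}^{-1}G_t$ that you invoke does not follow from Proposition \ref{cg} alone (that proposition relates $G_t$, $C_t$, $B_t$ for one fixed initial seed); it is a nontrivial consequence of sign-coherence due to Nakanishi--Zelevinsky and would have to be proved or cited separately.

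The paper sidesteps both issues by transporting seeds rather than changing the base: writing $\Sigma_{t_0}=\mu_{s_k}\cdots\mu_{s_1}(\Sigma_{t_1})$ and setting $\Sigma_t=\mu_{s_k}\cdots\mu_{s_1}(\Sigma_{t_2})$, it notes that the recurrences of Proposition \ref{cgmutation} for $(G,C,B)$ depend only on $(G,C,B)$, so $(G_t,C_t,B_t)=(G_{t_0},C_{t_0},B_{t_0})=(I_n,I_n,B_{t_0})$. Lemma \ref{mainlem} then applies with the \emph{original} initial seed $t_0$, giving $\Sigma_t=\Sigma_{t_0}$, and the inverse mutation sequence yields $\Sigma_{t_2}=\Sigma_{t_1}$. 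If you wish to salvage your route, this transport trick is the missing ingredient; otherwise you must import the Nakanishi--Zelevinsky base-change identities and the Fomin--Zelevinsky separation formula.
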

\begin{proof}
By Proposition \ref{cg} and due to $C_{t_1}=C_{t_2}$, we have $B_{t_1}=B_{t_2}$ and $G_{t_1}=G_{t_2}$.
Since $\Sigma_{t_0}$ can be obtained from $\Sigma_{t_1}$ by a sequence of mutations,  we write  $\Sigma_{t_0}=\mu_{s_k}\cdots\mu_{s_2}\mu_{s_1}(\Sigma_{t_1})$. Let $\Sigma_t=\mu_{s_k}\cdots\mu_{s_2}\mu_{s_1}(\Sigma_{t_2})$.
Since $(G_{t_1},C_{t_1},B_{t_1})=(G_{t_2},C_{t_2},B_{t_2})$, by Proposition \ref{cgmutation}, we know that $(G_{t},C_{t},B_{t})=(G_{t_0},C_{t_0},B_{t_0})=(I_n,I_n,B_{t_0})$. By Lemma \ref{mainlem}, we know $\Sigma_t=\Sigma_{t_0}$ and thus
$\mu_{s_1}\mu_{s_2}\cdots\mu_{s_k}(\Sigma_t)=\mu_{s_1}\mu_{s_2}\cdots\mu_{s_k}(\Sigma_{t_0})$, i.e. $\Sigma_{t_2}=\Sigma_{t_1}$.
\end{proof}

As readers can see that the proof  of this result depends mainly  on the  positivity of cluster variables and sign-coherence of $c$-vectors of a cluster algebra.  Positivity and sign-coherence are two  deep phenomena in cluster algebras. It is known in \cite{NZ} that sign-coherence can relate many other properties of cluster algebras.  Also, we believe that positivity can be used to explain some  other properties of cluster algebras.  Through the method of the proof of Theorem \ref{mainthm}, we attempt to provide an evidence for the essentiality of positivity and sign-coherence.

 It was proved  that $\Sigma_t$ is uniquely determined by $G_t$ for a skew-symmetric matrix $B_{t_0}$ in \cite{DWZ}. By Proposition \ref{cg}, $C_{t_1}=C_{t_2}$ if and only if $G_{t_1}=G_{t_2}$. So, the result of Theorem \ref{mainthm} means that $\Sigma_t$ is uniquely determined by $G_t$, which gives a generalization of the  result in \cite{DWZ} in the skew-symmetrizable case.

\section{How to determine seed in sign-skew-symmetric case}

 Finally, we discuss how to determine a seed using $C$-matrix or $G$-matrix in sign-skew-symmetric case. The obtained result may be thought as the weak version of Conjecture \ref{coj1}.

 In this section, we always assume that  $\mathcal A_{t_0}$ is an acyclic sign-skew-symmetric cluster algebra with principal coefficients at $t_0$. Here, the ``acyclic" condition is assumed since we need the following conclusion.

\begin{Proposition}(Theorem 7.11 and 7.13, \cite{HL})\label{fp}
Let $\mathcal A_{t_0}$ be an acyclic sign-skew-symmetric cluster algebra with principal coefficients at $t_0$, then
\\
(i) Each cluster variable $x_{i;t}\in \mathbb Z_{\geq0}[y_1,\cdots,y_m][x_{1;t_0}^{\pm1},\cdots,x_{n;t_0}^{\pm1}]$.
\\
(ii) Each {\bf F-polynomial} has constant term $1$.
\end{Proposition}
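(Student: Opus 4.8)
The plan is to deduce both statements from the already-established skew-symmetric case (Theorem \ref{poconj}, Proposition \ref{gf}, and Proposition \ref{gf0}) by means of an \emph{unfolding}. To the acyclic sign-skew-symmetric matrix $B_{t_0}$ I would attach a skew-symmetric integer matrix $H=(h_{qp})$ whose index set $\mathcal I$ carries a surjection $\pi:\mathcal I\to\{1,\dots,n\}$ with fibers $\mathcal I_i=\pi^{-1}(i)$, subject to two conditions: (a) $b^{t_0}_{ij}=\sum_{p\in\mathcal I_j}h_{qp}$ for every $q\in\mathcal I_i$, so that the \emph{folding} of $H$ recovers $B_{t_0}$; and (b) no two indices in the same fiber are joined by an arrow of $H$. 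Condition (b) is exactly what makes the orbit-mutation $\hat\mu_k:=\prod_{p\in\mathcal I_k}\mu_p$ well defined and independent of the order of its factors. Acyclicity of $B_{t_0}$ is the hypothesis that allows one to build such an $H$ in the first place, possibly on a countably infinite index set.

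Next I would prove, by induction on the length of a mutation sequence $w=\mu_{s_\ell}\cdots\mu_{s_1}$, that the lifted sequence $\hat w=\hat\mu_{s_\ell}\cdots\hat\mu_{s_1}$ keeps the unfolding data stable: writing $H_t=\hat w(H)$ and $B_t=w(B_{t_0})$, the folding of $H_t$ equals $B_t$ and condition (b) survives at $H_t$. This stability, together with the induced compatibility between the exchange relations of the two cluster algebras, is the crux of the argument and the step I expect to be the main obstacle, since one must verify that the ``no arrows inside a fiber'' property is preserved by orbit-mutation; here acyclicity, and the inductive control it provides over the sign pattern of $B_t$, is what drives the induction through.

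For part (i), I would fix a mutation sequence reaching the seed $t$; it involves only finitely many indices of $\mathcal I$, so it takes place inside a finite skew-symmetric submatrix, and Theorem \ref{poconj} applies to the skew-symmetric cluster algebra $\mathcal A_H$ with principal coefficients attached to it. Then I would write down the specialization homomorphism sending, in the initial Laurent ring of $\mathcal A_H$, every initial variable indexed by $\mathcal I_i$ to $x_{i;t_0}$ and the principal coefficients fiber-wise to $y_1,\dots,y_m$, and check (using the mutation-compatibility of Step~2) that it carries the relevant cluster variable of $\mathcal A_H$ to $x_{i;t}$. Since a specialization with monomial targets sends $\mathbb Z_{\ge0}$-Laurent polynomials to $\mathbb Z_{\ge0}$-Laurent polynomials, the nonnegativity of the coefficients of $x_{i;t}$ follows.

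For part (ii), I would apply the very same specialization to $F$-polynomials. By Proposition \ref{gf} the $c$-vectors of the skew-symmetric $\mathcal A_H$ are sign-coherent, so by Proposition \ref{gf0}(ii) every $F$-polynomial of $\mathcal A_H$ has constant term $1$. Fiber-wise specialization of the coefficient variables sends monomials to monomials and, all coefficients being nonnegative, cannot create cancellation; hence the constant term $1$ survives and the resulting $F_{i;t}^{t_0}$ again has constant term $1$. By the equivalence recorded in Proposition \ref{gf0}(ii), this simultaneously yields the sign-coherence of the $c$-vectors of $\mathcal A_{t_0}$ in the acyclic sign-skew-symmetric case.
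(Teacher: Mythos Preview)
The paper does not prove this proposition at all; it simply quotes it as Theorems~7.11 and~7.13 of \cite{HL} and uses it as a black box. Your proposal is not a comparison target for the paper's proof but rather a sketch of the method of the cited reference itself: the title of \cite{HL} already announces that positivity and the $F$-polynomial statement for acyclic sign-skew-symmetric cluster algebras are obtained via an unfolding to a (possibly infinite, locally finite) skew-symmetric matrix, exactly along the lines you describe. In that sense your outline is on the right track and matches the source the paper relies on.

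The one place where your sketch understates the difficulty is the stability step: preserving ``no arrows inside a fiber'' under orbit-mutation is not a routine induction on mutation length using only acyclicity of $B_{t_0}$, because acyclicity is not mutation-invariant and the sign pattern of $B_t$ is not controlled in any simple way after a few mutations. In \cite{HL} this is handled not by tracking signs of $B_t$ directly but by constructing the unfolding as a specific covering quiver (a strongly almost finite quiver with a group action) for which the orbit-mutation compatibility is established structurally. If you want to turn your proposal into an actual proof you should expect the bulk of the work to go into building that covering and proving its mutation stability, rather than into the specialization argument, which is indeed straightforward once the unfolding is in place.
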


\begin{Lemma}\label{lemsign}
Let $\mathcal A_{t_0}$ be an acyclic sign-skew-symmetric cluster algebra with principal coefficients at $t_0$. If $\Sigma_t=(X_t,\tilde B_t)$ is a seed of $\mathcal A_{t_0}$ satisfying $(G_t, C_t, B_t)=(G_{t_0}, C_{t_0}, B_{t_0})$, then $\Sigma_t=\Sigma_{t_0}$.
\end{Lemma}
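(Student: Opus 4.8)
The plan is to reduce Lemma \ref{lemsign} to essentially the same argument as Lemma \ref{mainlem}, now working with the hypotheses available in the acyclic sign-skew-symmetric setting. The key point is that the hypothesis $(G_t,C_t,B_t)=(G_{t_0},C_{t_0},B_{t_0})=(I_n,I_n,B_{t_0})$ plays the role that $G_t=I_n$ played before, but here we are given it directly (in Lemma \ref{mainlem} it was deduced via Proposition \ref{cg}, which is a skew-symmetrizable statement and is no longer available). So first I would note that, since $B_t=B_{t_0}$ and $C_t=I_n$, the seed $\Sigma_t$ can itself be taken as a seed of principal-coefficients type, and one may view $\mathcal A_{t_0}$ as having principal coefficients at $t$ as well; moreover the acyclicity of $B_{t_0}=B_t$ is preserved, so Proposition \ref{fp} applies with either $t_0$ or $t$ as the initial vertex.

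Next I would carry out the $X$-function substitution exactly as in the proof of Lemma \ref{mainlem}: write the expansion (\ref{eqt}) of $x_{i;t}$ in the initial cluster at $t_0$ and the expansion (\ref{eqt0}) of $x_{j;t_0}$ in the initial cluster at $t$, substitute the latter into the former, cancel the common factor $x_{i;t}$ (legitimate since cluster variables are nonzero elements of the field $\mathcal F$), and then specialize $x_{1;t}=\cdots=x_{n;t}=1$ to obtain the identity (\ref{yequation}) relating $F_{i;t}^{t_0}$ and $F_{i;t_0}^{t}$ in $\mathbb{Q}(y_1,\dots,y_n)$. Here I would use that, by Proposition \ref{fp}(i), both $F_{i;t}^{t_0}$ and $F_{i;t_0}^{t}$ lie in $\mathbb{Z}_{\geq 0}[y_1,\dots,y_n]$, and by Proposition \ref{fp}(ii) each has constant term $1$. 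This last fact, combined with Proposition \ref{gf0}(ii), gives that $F_{i;t}^{t_0}$ has a unique maximal monomial $y_1^{k_{1i}}\cdots y_n^{k_{ni}}$ with coefficient $1$ dividing all other monomials — precisely the structural input that the denominator-clearing contradiction argument needs.

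With these ingredients in hand, the final step is the positivity/divisibility contradiction argument verbatim from Lemma \ref{mainlem}: assuming $F_{i;t}^{t_0}\neq 1$, write $F_{i;t}^{t_0}=y_1^{k_{1i}}\cdots y_n^{k_{ni}}+u_i+1$ with $(k_{1i},\dots,k_{ni})\neq 0$ and each monomial of $u_i$ dividing the top monomial, write $F_{i;t_0}^{t}=w_i+1$, clear denominators in (\ref{yequation}), and derive that a sum of nonzero elements of $\mathbb{Z}_{\geq 0}[y_1,\dots,y_n]$ (in particular the term $h_1^{k_{1i}}\cdots h_n^{k_{ni}}\neq 0$) vanishes — a contradiction. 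Hence $F_{i;t}^{t_0}=1$ for all $i$, so $x_{i;t}=x_{i;t_0}$ by (\ref{eqt}), and since the exchange matrices already agree we conclude $\Sigma_t=\Sigma_{t_0}$.

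The only genuinely delicate point — and the step I expect to be the main obstacle — is justifying that one may legitimately regard $\mathcal A_{t_0}$ as a cluster algebra with principal coefficients at $t$ and that the $F$-polynomials $F_{i;t_0}^{t}$ and $F_{i;t}^{t_0}$ are well-defined objects to which Proposition \ref{fp} applies. In the skew-symmetrizable case of Lemma \ref{mainlem} this was underwritten by Proposition \ref{cg} (the $CG$-duality identities), which is not stated here for sign-skew-symmetric matrices; so I would instead rely directly on the hypothesis $C_t=I_n$ together with $B_t=B_{t_0}$, which literally says the extended exchange matrix at $t$ is $\begin{pmatrix}B_{t_0}\\ I_n\end{pmatrix}$ — the defining form of principal coefficients — and on the fact that $B_{t_0}$ acyclic is a hypothesis of the whole section, so Proposition \ref{fp} is available from both base points. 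Everything after that is the same formal manipulation as before; there is no new hard analysis.
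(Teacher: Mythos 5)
Your proposal is correct and follows essentially the same route as the paper, which itself only sketches this lemma by saying "use the same method as Lemma \ref{mainlem}, based on Proposition \ref{fp}"; you supply exactly that substitution-and-specialization argument, replacing Theorem \ref{poconj} and Proposition \ref{gf} by Proposition \ref{fp}(i)--(ii) (with Proposition \ref{gf0}(ii) converting the constant-term-$1$ property into the maximal-monomial structure). Your identification of the delicate point --- that Proposition \ref{cg} is unavailable and one must instead read the principal-coefficients structure at $t$ directly off the hypothesis $C_t=I_n$, $B_t=B_{t_0}$ --- matches the remark the authors make immediately after their sketch.
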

{\em Sketch of Proof:}\;
Since $C_t=C_{t_0}=I_n$ and $G_t=G_{t_0}=I_n$, $\mathcal A_{t_0}$ can be also seen as a cluster algebra with principal coefficients at $t$. Base on Proposition \ref{fp}, we can use the same method with that of Lemma \ref{mainlem} so as to show $X_t=X_{t_0}$, which implies $\Sigma_t=\Sigma_{t_0}$ since $\tilde B_t=\tilde B_{t_0}$ is already known.\;\;\;\;\;\; $\square$\\

Note that since we have not the skew-symmetrizer $S$ in this case, Proposition \ref{cg} can not be used here as like in the proof of Lemma \ref{mainlem}. This is the reason we need the condition $(G_t, C_t, B_t)=(G_{t_0}, C_{t_0}, B_{t_0})$.

Following Lemma \ref{lemsign}, using the same method with the proof of Theorem \ref{mainthm}, we have:

\begin{Proposition}\label{mainpro}
Let $\mathcal A_{t_0}$ be an acyclic sign-skew-symmetric cluster algebra with principal coefficients at $t_0$. If $\Sigma_{t_1}$ and $\Sigma_{t_1}$ are two seeds of $\mathcal A_{t_0}$ with $(G_{t_1},C_{t_1},B_{t_1})=(G_{t_2},C_{t_2},B_{t_2})$, then $\Sigma_{t_1}=\Sigma_{t_2}$.
\end{Proposition}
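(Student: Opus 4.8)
The plan is to mirror exactly the argument already used to deduce Theorem \ref{mainthm} from Lemma \ref{mainlem}, but now starting from Lemma \ref{lemsign} instead. Suppose $\Sigma_{t_1}$ and $\Sigma_{t_2}$ are two seeds of $\mathcal A_{t_0}$ with $(G_{t_1},C_{t_1},B_{t_1})=(G_{t_2},C_{t_2},B_{t_2})$. Since $\Sigma_{t_0}$ is reachable from $\Sigma_{t_1}$, fix a mutation sequence and write $\Sigma_{t_0}=\mu_{s_k}\cdots\mu_{s_1}(\Sigma_{t_1})$, and set $\Sigma_{t}:=\mu_{s_k}\cdots\mu_{s_1}(\Sigma_{t_2})$.

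The key step is to observe that the triple $(G,C,B)$ evolves under mutation by rules that involve only the triple itself (and the fixed initial matrix $B_{t_0}$): the mutation of $B$ is given by (\ref{bmutation}), the mutation of $C$ by Proposition \ref{cgmutation}(i), and the mutation of $G$ by Proposition \ref{cgmutation}(ii) — and crucially the right-hand sides of all three depend only on entries of $B_t$, $C_t$, $G_t$ and $B_{t_0}$. Hence $(G_{t_1},C_{t_1},B_{t_1})=(G_{t_2},C_{t_2},B_{t_2})$ is propagated along any common mutation sequence, so after applying $\mu_{s_k}\cdots\mu_{s_1}$ we get $(G_{t},C_{t},B_{t})=(G_{t_0},C_{t_0},B_{t_0})=(I_n,I_n,B_{t_0})$. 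Now Lemma \ref{lemsign} applies directly to $\Sigma_t$ and yields $\Sigma_t=\Sigma_{t_0}$. Finally, applying the inverse mutation sequence $\mu_{s_1}\cdots\mu_{s_k}$ (each $\mu_{s_i}$ is an involution on seeds) to both sides of $\Sigma_t=\Sigma_{t_0}$ gives $\Sigma_{t_2}=\Sigma_{t_1}$, as desired.

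There is really no serious obstacle here: the whole content has been packaged into Lemma \ref{lemsign}, which in turn rests on Proposition \ref{fp} (positivity and constant-term-$1$ of $F$-polynomials in the acyclic sign-skew-symmetric case, from \cite{HL}). The only point worth double-checking is that Proposition \ref{cgmutation}(ii) — the $G$-matrix mutation rule — indeed references only $B_t$, $C_t$, $G_t$ and the fixed $B_{t_0}$, so that equality of the triples at $t_1$ and $t_2$ is genuinely preserved step by step; this is immediate from the stated formula. I would also note explicitly, as the text already does for Lemma \ref{lemsign}, that because $S$ is unavailable Proposition \ref{cg} cannot be invoked to recover $G$ from $C$ (or $B$), which is precisely why the hypothesis is stated for the full triple $(G_{t_1},C_{t_1},B_{t_1})$ rather than for $C_{t_1}$ alone; this is the sense in which the conclusion is only a weak version of Conjecture \ref{coj1}.
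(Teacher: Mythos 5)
Your proposal is correct and is exactly the argument the paper intends: it transports the equality of triples $(G,C,B)$ along a mutation sequence to $t_0$ using Proposition \ref{cgmutation}, applies Lemma \ref{lemsign}, and mutates back using the involutivity of mutation, just as in the proof of Theorem \ref{mainthm}. No gaps.
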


\begin{Remark}
When $B_{t_0}$ is skew-symmetrizable, by Proposition \ref{cg}, either $G_{t_1}=G_{t_2}$ or $C_{t_1}=C_{t_2}$  implies $(G_{t_1},C_{t_1},B_{t_1})=(G_{t_2},C_{t_2},B_{t_2})$. So, Proposition \ref{mainpro} can be thought as a weak version of Theorem \ref{mainthm} in the acyclic sign-skew-symmetric case.
\end{Remark}

{\bf Acknowledgements:}\; This project is supported by the National Natural Science Foundation of China (No.11671350 and No.11571173).

%\end{CJK*}

\end{document}